\documentclass[letterpaper,onecolumn]{ieeeconf}

\IEEEoverridecommandlockouts                              % This command is only needed if 
\usepackage{graphics} % for pdf, bitmapped graphics files
\usepackage{epsfig} % for postscript graphics files
\usepackage{mathptmx} % assumes new font selection scheme installed
\usepackage{times} % assumes new font selection scheme installed
\usepackage{amsmath,amssymb,amsfonts}
\usepackage{algorithmic}
\usepackage{graphicx}% Required for inserting images
\usepackage{textcomp}
\usepackage{xcolor}
\usepackage{booktabs}
\usepackage{amsfonts}
\usepackage{hyperref}
\usepackage{bm}
\usepackage[english]{babel}

\usepackage{amsthm}
\usepackage{mathtools} % for  := 
\usepackage{arydshln}
\newtheorem{definition}{Definition}
\newtheorem{theorem}{Theorem}
\newtheorem{lemma}{Lemma}

\theoremstyle{remark}

\newcommand{\R}{\mathbb{R}}

\newcommand{\E}{\mathbb{E}}
\newcommand{\bmtx}{\begin{bmatrix}}
\newcommand{\emtx}{\end{bmatrix}}
\newcommand{\bsmtx}{\left[ \begin{smallmatrix}} 
\newcommand{\esmtx}{\end{smallmatrix} \right]}
\newcommand{\bmatarray}[1]{\left[\begin{array}{#1}}
\newcommand{\ematarray}{\end{array}\right]}

\title{\LARGE \bf
Regret of $H_\infty$ Preview Controllers
}

\author{Jietian Liu$^{1}$ and Peter Seiler$^{1}$% <-this % stops a space
\thanks{$^{1}$Jietian Liu and Peter Seiler are with the Department of Electrical Engineering and Computer Science, University of Michigan,
Emails: {\tt\small jietian@umich.edu} and {\tt\small pseiler@umich.edu}}%
}

\begin{document}

\maketitle
\thispagestyle{empty}
\pagestyle{empty}

%%%%%%%%%%%%%%%%%%%%%%%%%%%%%%%%%%%%%%%%%%%%%%%%%%%%%%%%%%%%%%%%%%%%%%%%%%%%%%%%
\begin{abstract}
This paper studies preview control in both the $H_\infty$ and regret-optimal settings. The plant is modeled as a discrete-time, linear time-invariant system subject to external disturbances. The performance baseline is the optimal non-causal controller that has full knowledge of the disturbance sequence. We first review the construction of the $H_\infty$ preview controller with $p$-steps of disturbance preview. We then show that the closed-loop $H_\infty$ performance of this preview controller converges as $p\to \infty$ to the performance of the optimal non-causal controller. Furthermore, we prove that the optimal regret of the preview controller converges to zero. These results demonstrate that increasing preview length allows controllers to asymptotically achieve non-causal performance in both the $H_\infty$ and regret frameworks. A numerical example illustrates the theoretical results.
\end{abstract}

%%%%%%%%%%%%%%%%%%%%%%%%%%%%%%%%%%%%%%%%%%%%%%%%%%%%%%%%%%%%%%%%%%%%%%%%%%%%%%%%
\section{INTRODUCTION}
This paper considers preview control in both the $H_\infty$ and regret-optimal settings for discrete-time, linear time-invariant (LTI) systems subject to external disturbances. The classical $H_\infty$ formulations \cite{zhou96,dullerud99} evaluate performance by minimizing the worst case of closed-loop gain from disturbance to error. These approaches provide powerful worst-case guarantees but do not explicitly account for the advantage of preview information, nor do they compare performance against more informative non-causal benchmarks.

Preview control arises in applications where sensors provide advance measurements of external disturbances. For example, wind turbines equipped with LIDAR sensors can measure the incoming wind field to improve power capture and reduce structural loads \cite{ozdemir13,schlipf13,scholbrock16}, while forward-looking sensors in active vehicle suspensions provide preview of the upcoming road profile \cite{Tomizuka1976OptimalLinearPreviewControl,Hac1992OptimalLinearPreviewControl,ROH1999StochasticOptimalPreviewControl,Louam1988OptimalControl,YOSHIMURA1993AnActiveSuspensionModel,MARZBANRAD2004StochasticOptimalPreviewControl}. In these settings, future disturbances cannot be eliminated, but their effects can be anticipated and mitigated. This motivates robust preview formulations, such as $H_\infty$ preview control, which exploit finite preview information while providing worst-case performance guarantees under uncertainty.

An alternative is regret-optimal control, which measures the performance of a causal controller relative to a baseline controller with greater information or capability. Regret is defined as the performance difference between the
closed-loop cost of the causal controller and that of the baseline
controller. Typical measures for this performance difference include
additive regret and competitive ratio. Prior works have considered baselines such as the optimal non-causal controller with full disturbance preview \cite{goel22TAC,goel19PMLR,goel22CDC,goel21PMLR,sabag21ACC,sabag22CDC,goel23TAC,goel23PMLR,goel22arXivGH, Zhou23CDC, Didier22ICSL, Martin22PMLR, MARTIN2023IFAC, BROUILLON2023IFAC, MARTIN2023IFAC, Martinelli24ECC,Tsukamoto24CDC,KarapetyanEtAl22CDC}, or the best static state-feedback law \cite{agarwal19,hazan16, li2021safeadaptivelearningbasedcontrol}. Recently, several studies have extended regret-optimal control to uncertain settings under the umbrella of robust or distributionally robust regret control\cite{kargin24pmlr,kargin24pmlrIH,hajar23Allerton,taha23CDC,YAN25IFAC,Cho24LCSS,agarwal22pmlr,bitar2024distributionallyrobustregretminimization,fiechtner2025wassersteindistributionallyrobustregret}. These formulations typically neglect the preview information.

Recent work on regret-based control has also considered finite disturbance preview, for example in predictive or MPC-based frameworks \cite{Zhang2021,Lin2021}. In these approaches, preview information is incorporated through receding-horizon optimization over a finite window. In contrast, the preview formulation considered in this paper is defined with respect to an infinite-horizon cost, with preview appearing only as an information constraint.

There is a large literature on finite preview control for $H_\infty$ formulation in both finite horizon and infinite horizon \cite{Wang_Zhang_Xie_2014,Hazell_Limebeer_2008,Liu_Lan_2024,KATAYAMA_HIRONO_1987,Katoh_2004,Kojima_2005,Kojima_2015,Kojima_Ishijima_2003,Kojima_Ishijima_2004,Kojima_Ishijima_2006,TadmorMirkin05TAC1,TadmorMirkin05TAC2,Grimble91TSP,BolzernEtAl04TSP,KojimaIshijima95TAC,Tadmor97TAC,MIRKIN03Automatica,MirkinTadmor04ACC,MirkinMeinsma02IFAC,WangEtAl13CCC,ZhangEtAl12CCC,WangEtAl10ICA}. A common approach in discrete-time is to construct an augmented system with a chain
of delays to store the disturbance preview information. This technique simplifies the preview control design as the standard
$H_\infty$ solutions can be applied to this augmented model.

In this paper, we study the interplay between $H_\infty$ preview control and regret-optimal control with preview. We first revisit the standard synthesis of $p$-step $H_\infty$ preview controllers using state augmentation. We then establish two key convergence results: (i) the $H_\infty$ preview controller achieves closed-loop performance that converges to the non-causal $H_\infty$ bound as the preview horizon grows, and (ii) the optimal regret of the $p$-step preview controller converges to zero as $p\to\infty$. Thus, preview controllers asymptotically match the performance of the optimal non-causal controller. Preview controllers require additional
sensors to provide the preview measurements. The benefit is
that the preview controllers are LTI with a well-understood theory. In contrast, more recent regret-based controllers converge to the optimal
non-causal controller over time by typically relying on time-varying, online optimization. These regret-based controllers do not require
additional preview sensors but are time-varying, in general,
and hence less understood.

\section{PROBLEM FORMULATION}

Consider a discrete-time, linear time-invariant (LTI) plant $P$ with the following state-space representation:
\begin{align}
  \label{eq:Pic}
   x(t+1)= Ax(t)+B_dd(t) + B_uu(t),
\end{align}
where $x(t) \in \R^{n_x}$ is the state, $d(t) \in \R^{n_d}$ is the
disturbance, and $u(t) \in \R^{n_u}$ is the control input at time $t$, respectively.

We consider controllers with access to state measurements and finite preview of the disturbance 
$d(t)$. Specifically, the controller has the following information at time $t$:
\begin{align}
\label{eq:IHinfo}
\begin{split}    
    i_p(t) &  :=   \{x(t),d(t), \dots,d(t+p)\}, 
\end{split}
\end{align}
where $p\geq0$ denotes the number of preview steps. The case $p=0$ corresponds to no preview and is known as full information \cite{zhou96,dullerud99}, while $p \to \infty$ corresponds to complete knowledge of all future disturbances.

The goal is to design a $p$-step preview controller $K_p$ to stabilize the plant and ensure good performance as measured with a linear quadratic cost. In particular, the cost for a controller $K_p$ evaluated on a specific disturbance $d\in \ell_2$ with initial condition $x(0)=0$ is defined as follows:
\begin{align}
\label{eq:Jinf}
J(K_p,d) :=  
\sum_{t=0}^\infty x(t)^\top Qx(t)+u(t)^\top R u(t).
\end{align}
Additional technical assumptions on the cost and state matrices will be given below to ensure the optimal control problem is well-posed.

We will use the optimal non-causal controller, denoted $K_{nc}$, as a
baseline for comparison, following 
\cite{goel22TAC,goel22CDC,goel21PMLR,sabag21ACC,sabag22CDC,goel23TAC}. The
controller $K_{nc}$ depends directly on the state as well as the present and all future values of $d$. The controller $K_{nc}$ is optimal in the sense that it minimizes $J(K,d)$ for each $d\in \ell_2$. A solution for the optimal non-causal controller is given by Theorem 11.2.1 of
\cite{hassibi99}. Related  non-causal results (both finite and infinite horizon) are given in \cite{goel22TAC,goel22CDC,goel21PMLR,sabag21ACC,sabag22CDC,goel23TAC} where the non-causal controller is used as a baseline for regret-based control design. We focus on the infinite-horizon case where the non-causal controller has access to the following
information at time $t$:
\begin{align}
\label{eq:NCinfo}
    i_{nc}(t) &  :=   \{x(t),d(t), d(t+1),\ldots\}. 
\end{align} 
The optimal non-causal controller for the infinite-horizon
case can be constructed using the stabilizing solution $X$ of a discrete-time algebraic Riccati equation (DARE). Specifically, the DARE associated with the data $(A,B,Q,R)$ is defined as follows:
\begin{align}
    \label{eq:DARE}
    0 & = X  - A^\top X A  - Q 
     + A^\top X B (R+B^\top X B)^{-1} B^\top X A.
\end{align}
The next theorem provides a state-space model of $K_{nc}$.

\begin{theorem}
\label{thm:NC}
Let $(A,B_u,B_d,Q,R)$ be given and assume: 
(i) $Q\succeq 0$ and $R \succ 0$, (ii) $\left(A, B_u\right)$ stabilizable, (iii) $A$ is nonsingular, and (iv) $(A, Q)$ has no unobservable modes on the unit circle. Then:
\begin{enumerate}
%\item  $(R+B_u^\top XB_u)$ is nonsingular.
\item 
There is a unique stabilizing solution $X \succeq 0$ to DARE~\eqref{eq:DARE} defined by $(A,B_u,Q,R)$.
\item Define the gain
$K_x :=  (R+B_u^\top X B_u)^{-1}B_u^\top XA$.
Then $\tilde{A}  :=  A - B_u K_x$ is Schur and nonsingular.
\item  Define a non-causal controller $K_{nc}$ with inputs $i_{nc}(t)$ given in \eqref{eq:NCinfo} and output $u_{nc}(t)$ by the following non-causal state-space model:
\begin{align}
  \label{eq:Knc}
  \begin{split}
    v_{nc}(t) & = \tilde{A}^\top [ v_{nc}(t+1) + X B_d d(t) ], 
    \,\,\, v_{nc}(\infty)=0, \\
    u_{nc}(t) & =  -K_x x(t) - K_v v_{nc}(t+1) - K_d  d(t),
  \end{split}
\end{align}
where 
\begin{align*}
  K_v &  :=   (R+B_u^\top X B_u)^{-1} B_u^\top, \\
  K_d &  :=   (R+B_u^\top X B_u)^{-1} B_u^\top  X B_d.
\end{align*}
Then $J(K_{nc},d) \le J(K,d)$ for any stabilizing controller $K$ and
disturbance $d\in\ell_2$.
\end{enumerate}
\end{theorem}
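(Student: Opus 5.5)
The plan is to treat items 1 and 2 as consequences of standard DARE theory and then prove item 3 by completing the square along the closed loop.

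\textbf{Items 1 and 2.} Under (i), (ii) and (iv), the existence of a unique stabilizing solution $X\succeq 0$ of \eqref{eq:DARE} follows from standard results (e.g., \cite{zhou96}), with $B=B_u$. Stabilizing means $\tilde A = A-B_uK_x$ is Schur. For nonsingularity of $\tilde A$, use the identity
\begin{align*}
\tilde A = \bigl(I + B_u R^{-1}B_u^\top X\bigr)^{-1} A,
\end{align*}
which follows from the push-through identity once we note $K_x = R^{-1}B_u^\top X\tilde A$. The factor $I+B_uR^{-1}B_u^\top X$ is invertible, since its eigenvalues equal $1$ plus those of $R^{-1/2}B_u^\top X B_u R^{-1/2}\succeq 0$. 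Combined with (iii), this makes $\tilde A$ a product of nonsingular matrices.

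\textbf{Item 3 (optimality).} Fix $d\in\ell_2$ and any stabilizing $K$, so that $x,u\in\ell_2$.
\begin{enumerate}
\item Define $v(t)$ by the anticausal recursion in \eqref{eq:Knc}. Because $\tilde A$ is Schur, $v(t)=\sum_{k\ge t}(\tilde A^\top)^{k-t+1}XB_dd(k)$ converges and lies in $\ell_2$.
\item Consider the storage function $V(t)=x(t)^\top X x(t) + 2x(t)^\top v(t)$ and compute $V(t+1)-V(t)$ along \eqref{eq:Pic}. Use the DARE to cancel the pure-$x$ quadratic terms and the $v$-recursion to cancel the $x$--$d$ and $x$--$v$ cross terms.
\item This should yield, for every $t$,
\begin{align*}
x^\top Qx + u^\top Ru = V(t)-V(t+1) + \bigl\|u-u_{nc}\bigr\|^2_{R+B_u^\top XB_u} + c(t),
\end{align*}
where $u_{nc}$ is given by \eqref{eq:Knc}, evaluated with the current state $x(t)$, and $c(t)$ depends only on $d$ and $v$, not on $x$ or $u$.
\item Sum over $t$. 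Since $x(0)=0$ and $x,v\to 0$, the telescoping sum vanishes, giving
\begin{align*}
J(K,d)=\sum_t \|u(t)-u_{nc}(t)\|^2_{R+B_u^\top XB_u}+\sum_t c(t).
\end{align*}
The second sum is independent of the controller.
\item The controller $K_{nc}$ makes the first sum zero. It is admissible: with $u=u_{nc}$ the closed loop is $x(t+1)=\tilde A x(t)+{}$($\ell_2$ input), so $x\in\ell_2$. Hence $J(K_{nc},d)\le J(K,d)$.
\end{enumerate}

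\textbf{Main obstacle.} The hard part is the algebra in step 3: verifying that, after expanding $x(t+1)$, the terms linear in $x$ cancel exactly. This needs the identity $A^\top X B_u (R+B_u^\top XB_u)^{-1}=K_x^\top$ together with $\tilde A^\top = A^\top - K_x^\top B_u^\top$. I would first complete the square in $u$, identify the minimizer, and then match the remaining $x$-linear terms against $v(t)=\tilde A^\top[v(t+1)+XB_dd(t)]$.
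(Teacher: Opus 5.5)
Your proof is correct, but the paper gives no argument of its own for this theorem, so your route is genuinely different. The paper attributes items 1--2 to classical DARE theory and simply asserts that (iii) makes $\tilde A$ nonsingular. It then obtains item 3 as the $S=0$ special case of Lemma~1 and Theorem~1 of \cite{liu2024robust}.

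You instead give a self-contained proof.

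\textbf{Nonsingularity.} Your identity $(I+B_uR^{-1}B_u^\top X)\tilde A=A$, derived from $RK_x=B_u^\top X\tilde A$, supplies the justification the paper omits. One small precision: only the nonzero eigenvalues of $B_uR^{-1}B_u^\top X$ coincide with those of $R^{-1/2}B_u^\top XB_uR^{-1/2}$. Alternatively, use $\det(I+B_uR^{-1}B_u^\top X)=\det(R)^{-1}\det(R+B_u^\top XB_u)$.

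\textbf{Step 3.} The algebra goes through as you anticipate. Set $H:=R+B_u^\top XB_u$ and $w(t):=v(t+1)+XB_dd(t)$. The DARE reduces the $x$-quadratic part to $x^\top Xx$. The $x$-linear part becomes $2x^\top(A-B_uK_x)^\top w(t)=2x^\top v(t)$. What remains is $c(t)=d^\top B_d^\top XB_dd+2d^\top B_d^\top v(t+1)-w^\top B_uH^{-1}B_u^\top w$. When passing to the limit, state explicitly that all series converge absolutely, since $d$, $v$, $x$, $u$ and $u_{nc}$ (evaluated along $x$) lie in $\ell_2$.

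\textbf{What each route buys.} The cited route covers the more general cost with a cross term $x^\top Su$. Your route gives an exact, trajectory-wise identity $J(K,d)-J(K_{nc},d)=\sum_t\|u(t)-u_{nc}(t)\|_H^2$, valid for any input with $u,x\in\ell_2$. It also gives the optimal cost explicitly as $J(K_{nc},d)=\sum_t c(t)$.

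That identity is reusable elsewhere in the paper. For $K_{2,p}$ it gives $u_{2,p}(t)-u_{nc}(t)=K_v\sum_{l\ge p+1}(\tilde A^\top)^lXB_dd(t+l)$. The regret is therefore at most $\|H^{-1/2}B_u^\top\|^2\|XB_d\|^2\bigl(\sum_{l\ge p+1}\|\tilde A^l\|\bigr)^2\|d\|_2^2$. This is a shorter proof of Lemma~\ref{lemma:H2cost} than the three-term estimate in the appendix, with rate of order $\alpha^{2p}$ rather than $\alpha^{p}$.
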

The Riccati-based results in items (1) and (2) of Theorem~\ref{thm:NC} are classical and follow from standard discrete-time LQR theory; see, e.g., \cite{hassibi99, zhou96,ChanEtAl84TAC}. 
The specific statement of Theorem~\ref{thm:NC} follows as a special case of Lemma 1 and Theorem 1 in \cite{liu2024robust}.\footnote{The results in \cite{liu2024robust} include a cross term $x^\top S u$ in the cost function.  Theorem~\ref{thm:NC} in this paper follows immediately by setting $S=0$.} The assumption that 
$A$ is nonsingular ensures that the closed-loop matrix $\tilde{A}$ is also nonsingular.
Note that the non-causal controller \eqref{eq:Knc} can be reformulated so that the control input $u_{nc}(t)$ is expressed directly in terms of $i_{nc}(t)$ as defined in \eqref{eq:NCinfo}. Unraveling the non-causal dynamics and noting that $K_d=K_vXB_d$ yields:
\begin{align}
   u_{nc}(t) = -K_x x(t)- K_v\sum_{j=t}^\infty \left( \tilde{A}^\top \right)^{j-t}X B_d d(j)
\end{align}

Finally, we define the performance of any $p$-step preview controller $K_p$ relative to the baseline, non-causal controller $K_{nc}$ as follows:

\begin{definition}
\label{def:nomregret}
Let $\gamma > 0$ be given.  A $p$-step preview controller $K_p$
achieves $\gamma$-regret relative to the optimal
non-causal controller $K_{nc}$ if 
the closed-loop is stable and:
\begin{align}
  \label{eq:robregret}
  \begin{split}
  J(K_p,d)-J(K_{nc},d) < \gamma^2 \| d \|_2^2 
  \qquad \forall d \in \ell_2, \, d\ne 0.
  \end{split}
\end{align}
\end{definition}
Let $\gamma_{R,p}$ denote the minimal $\gamma$-regret achieved with $p$ steps of preview. We will show that $\gamma_{R,p} \to 0$ as $p\to \infty$.

\section{BACKGROUND: $H_\infty$ CONTROL WITH PREVIEW}
\label{sec:HinfPreview}

This section reviews the $p$-step $H_\infty$ preview controller, which
provides the basis for the convergence results in
Section~\ref{sec:Main}. The formulation follows directly from the
standard full-information $H_\infty$ synthesis
\cite{zhou96,dullerud99,stoorvogel93,green2012linear} applied to an
augmented system that encodes finite disturbance preview.

A $\gamma$-suboptimal $H_\infty$ controller with $p$-step preview can be
constructed by augmenting the state to include future disturbances.
Related preview results in both finite- and infinite-horizon settings
can be found in
\cite{Wang_Zhang_Xie_2014,Hazell_Limebeer_2008,Liu_Lan_2024,KATAYAMA_HIRONO_1987,Katoh_2004,Kojima_2005,Kojima_2015,Kojima_Ishijima_2003,Kojima_Ishijima_2004,Kojima_Ishijima_2006}.
The augmented state at time $t$ is defined as
\begin{align}
\label{eq:AugStates}
\hat{x}_p(t)  :=  
\bmtx
x(t)\\ d(t)\\ \vdots\\ d(t+p-1)
\emtx \in \R^{n_x + p n_d}.
\end{align} The resulting augmented plant is:
\begin{align}
  \label{eq:PicAug}
   \hat{x}_p(t+1) =
    \hat{A}_p\hat{x}_p(t) + \hat{B}_{d,p} d(t+p)+ \hat{B}_{u,p} u(t),
\end{align}
where $d(t+p)$ denotes the $p$-step ahead disturbance available through preview at time $t$. The system matrices are:

{\small
\begin{align}
\label{eq:AugStateMatric}
\hat{A}_p &=
\bmtx
A & B_d & 0 & \cdots & 0\\
0 & 0 & I & \ddots & \vdots \\
\vdots & \ddots & \ddots & \ddots & 0 \\
\vdots & \ddots & \ddots & \ddots & I \\
0 & \cdots & \cdots & \cdots & 0
\emtx,
&
\hat{B}_{u,p} &=
\bmtx
B_u\\ 0\\ \vdots\\ 0
\emtx,
&
\hat{B}_{d,p} &=
\bmtx
0\\ \vdots\\ 0\\ I
\emtx.
\end{align}
}
The cost can be rewritten in terms of the augmented state:
\begin{align}
\label{eq:AugCost}
J(K_p,d) 
= \sum_{t=0}^\infty 
\hat{x}_p(t)^\top \hat{Q}_p \hat{x}_p(t) + u(t)^\top R u(t),
\end{align}
where $\hat{Q}_p  :=  \mathrm{blkdiag}(Q,0,\ldots,0)$ is the state cost matrix.

Using the augmented state $\hat{x}_p(t)$ in \eqref{eq:AugStates}, the
$p$-step preview problem can be formulated as a full-information $H_\infty$
control problem on the augmented dynamics \eqref{eq:PicAug}, where the
exogenous input is $d(t+p)$ and the measured signal is $\hat{x}_p(t)$.
Given $\gamma>0$, a controller is said to be $\gamma$-suboptimal if it
stabilizes the augmented closed loop and satisfies
\begin{align}
\label{eq:hinfgoal_preview}
J(K_p,d) < \gamma^2\| d \|_2^2 
\qquad \forall d \in \ell_2,\ d\ne 0 .
\end{align}

\begin{theorem}
\label{thm:HinfPreview}
Fix $p\ge 0$ and let $(\hat{A}_p,\hat{B}_{u,p},\hat{B}_{d,p},\hat{Q}_p,R)$ be
the augmented data defined in \eqref{eq:PicAug}--\eqref{eq:AugCost} and satisfy conditions $(i)-(iv)$ in Theorem~\ref{thm:NC}. For any $\gamma>0$, a $\gamma$-suboptimal $p$-step preview controller exists if and only if there
exists a symmetric matrix $\hat{X}_p$ such that:
\begin{itemize}
\item[1)] $\hat{X}_p$ satisfies the DARE defined by
$(\hat{A}_p,\hat{B}_p,\hat{Q}_p,\hat{R})$ where
$\hat{B}_p := [\hat{B}_{u,p},\,\hat{B}_{d,p}]$ and
$\hat{R} := \mathrm{diag}(R,-\gamma^2 I)$.
\item[2)] $ H := R+\hat{B}_{u,p}^{\top}\hat{X}_p\hat{B}_{u,p}\succ 0$.
\item[3)] $
\Delta_{p}\coloneq \gamma^2I-\hat{B}_{d,p}^{\top}\hat{X}_{p}\hat{B}_{d,p}
+\hat{B}_{d,p}^{\top}\hat{X}_{p}\hat{B}_{u,p}H^{-1}\hat{B}_{u,p}^{\top}\hat{X}_{p}\hat{B}_{d,p}\succ 0$
\end{itemize}
Moreover, when these conditions hold, one $\gamma$-suboptimal preview policy is
\begin{align}
\label{eq:HinfPreviewPolicy_aug}
u_{\infty,p}(t) = -\hat{K}_{x,p}\,\hat{x}_p(t) - \hat{K}_{d,p}\, d(t+p),
\end{align}
where $\hat{K}_{x,p} = H^{-1}\hat{B}_{u,p}^{\top}\hat{X}_p \hat{A}_p$ and 
$\hat{K}_{d,p} = H^{-1}\hat{B}_{u,p}^{\top}\hat{X}_p \hat{B}_{d,p}$.
\end{theorem}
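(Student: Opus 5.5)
The plan is to reduce the statement to the standard discrete-time full-information $H_\infty$ theorem (e.g., \cite{stoorvogel93,green2012linear}, or Theorem 3.7-type results in \cite{dullerud99}) applied to the augmented plant \eqref{eq:PicAug}. First, we establish an exact equivalence between the two problems.

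\textbf{Equivalence step.} Any causal map from $i_p(t)$ to $u(t)$ is the same as a causal map from $\{\hat{x}_p(\tau), d(\tau+p)\}_{\tau\le t}$ to $u(t)$. Indeed, $i_p(t)$ consists exactly of $x(t)$, the preview entries of $\hat{x}_p(t)$, and $d(t+p)$. Past information is recoverable from past $i_p$, so the information patterns coincide. Define the exogenous input $w(t):=d(t+p)$. With $x(0)=0$, the initial augmented state $\hat{x}_p(0)$ contains $d(0),\dots,d(p-1)$, and these are nonzero in general. To handle this, we extend $d$ by $p$ zeros at negative times and start the augmented system at $t=-p$ from zero state. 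During that initial window the controller sees only preview, and $x$ stays at zero until $t=0$ if no control is applied. Then $\|w\|_2=\|d\|_2$. Moreover, \eqref{eq:AugCost} equals the quadratic cost with $z=\hat{Q}_p^{1/2}\hat{x}_p$ and weighted input. Hence \eqref{eq:hinfgoal_preview} is exactly the full-information condition $\|z\|^2+\|R^{1/2}u\|^2<\gamma^2\|w\|^2$ for the augmented system, and stability of the augmented closed loop is the stability requirement. Both directions of the "if and only if" therefore transfer.

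\textbf{Applying the FI theorem.} The standard discrete-time FI $H_\infty$ result states that, under stabilizability of $(\hat{A}_p,\hat{B}_{u,p})$ and no unobservable unit-circle modes of $(\hat{A}_p,\hat{Q}_p)$, a $\gamma$-suboptimal controller exists iff the game DARE with $(\hat{A}_p,\hat{B}_p,\hat{Q}_p,\hat{R})$ has a stabilizing solution $\hat{X}_p\succeq0$ satisfying $H\succ0$ and the Schur complement condition $\Delta_p\succ0$. These are exactly items 1)–3). The hypotheses are inherited from the assumptions on the augmented data. If one prefers to check them directly: the delay-chain block is nilpotent, so the extra modes are at $0$ and are stable. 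Consequently both stabilizability and the unit-circle observability condition reduce to those for $(A,B_u)$ and $(A,Q)$.

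\textbf{Controller formula.} For sufficiency, we complete the square. Let $V(t)=\hat{x}_p(t)^\top\hat{X}_p\hat{x}_p(t)$. Using the DARE, we obtain
\[
V(t+1)-V(t)+\hat{x}_p^\top\hat{Q}_p\hat{x}_p+u^\top Ru-\gamma^2 w^\top w
= \|u+\hat{K}_{x,p}\hat{x}_p+\hat{K}_{d,p}w\|_H^2-\|w-w^*\|_{\Delta_p}^2 ,
\]
where $w^*$ is the worst-case disturbance. Substituting \eqref{eq:HinfPreviewPolicy_aug} zeroes the first term. Summing and using $V(0)=0$ and $V(\infty)=0$ gives the cost bound; strictness and closed-loop stability follow from the stabilizing property of $\hat{X}_p$.

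\textbf{Main obstacle.} The subtle points are the stability and strictness arguments. The first is that $\hat{X}_p$ being stabilizing for the game yields a stable closed loop under the $u$-policy alone, not merely under the saddle pair. The second is the rigorous handling of the initial preview window, which is what lets "$p$-step preview with $x(0)=0$" match the zero-initial-state FI setup. For the stability point I would cite the standard result rather than reprove it; the necessity direction is likewise taken from the standard theorem.
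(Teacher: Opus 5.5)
Your route is the same as the paper's: augment the state, apply the standard full-information $H_\infty$ theorem, and complete the square. The paper does this in one line, applying Theorem~9.2 of Stoorvogel with $e=\hat C_p\hat x_p+Du$ and $J(K_p,d)=\|e\|_2^2$. The genuine gap is your ``equivalence step.''

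\textbf{The initial-window equivalence fails.} Starting the augmented system at $t=-p$ from zero does not reproduce the original problem. During $t=-p,\dots,-1$ the full-information controller already sees $w(t)=d(t+p)$ and in general acts. For example, $u(-p)=-\hat K_{d,p}d(0)$ with $\hat K_{d,p}=H^{-1}B_u^\top\hat X_p^p$, which is generically nonzero. So $x(0)\neq0$ and cost accrues in the window. After shifting time, the zero-initial-state augmented problem is the original problem \emph{restricted} to disturbances with $d(0)=\dots=d(p-1)=0$, so every sample is previewed $p$ steps before it acts.

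This transfers the ``only if'' direction but not the ``if'' direction. For the latter you must handle $\hat x_p(0)=[0;\delta]$ with $\delta=(d(0),\dots,d(p-1))$. In your own identity, $V(0)=\delta^\top\hat X_{dd}\delta\neq0$, where $\hat X_{dd}$ is the trailing $pn_d\times pn_d$ block of $\hat X_p$. The central policy then gives $J-\gamma^2\|d\|_2^2=\delta^\top(\hat X_{dd}-\gamma^2I)\delta-\sum_t\|w(t)-w^*(t)\|_{\Delta_p}^2$. But $\Delta_p\succ0$ does not imply $\hat X_{dd}\prec\gamma^2I$.

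The gap is not merely expository. Take $A=2$, $B_u=B_d=Q=R=1$, so (i)--(iv) hold for $(A,B_u,Q,R)$; note (iii) can never hold for $\hat A_p$ itself when $p\ge1$, so that is the only sensible reading of the hypotheses. Nothing can act before $t=0$, so the impulse $d=(1,0,0,\dots)$ gives $J\ge\min_u[u^2+X(1+u)^2]=X/(1+X)=(1+\sqrt5)/4\approx0.81$ for \emph{every} controller, with $X=2+\sqrt5$. Yet the zero-initial augmented problem is solvable for any $\gamma^2>1/2$ once $p$ is large enough. For instance, $K_{2,p}$ works, since its transfer-function level approaches that of the non-causal controller, $\sup_\omega 1/(6-4\cos\omega)=1/2$. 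So for such $\gamma$, items 1)--3) hold with a stabilizing $\hat X_p$, but no controller meets the half-line requirement. The theorem holds only in the zero-initial-buffer convention that the paper's direct appeal to Stoorvogel implicitly adopts. State that convention explicitly instead of claiming an exact equivalence.

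\textbf{Items 1)--3) are not ``exactly'' the standard conditions.} The standard theorem requires $\hat X_p$ to be the \emph{stabilizing} solution with $\hat X_p\succeq0$, and your sufficiency argument uses this. Items 1)--3) ask only for a symmetric solution, and without stabilizability the ``if'' direction is false already for $p=0$. Take $A=2$, $B_u=B_d=R=1$, $Q=0$. Then $\hat X=0$ solves the game DARE with $H=1$ and $\Delta=\gamma^2$ for every $\gamma>0$. However, stabilizing the mode at $2$ forces $x(1)=-\sum_{s\ge1}2^{-s}u(s)$, hence $\sum_{t\ge1}u(t)^2\ge3x(1)^2$. The impulse at $t=0$ then gives $J\ge\min_u[u^2+3(1+u)^2]=3/4$. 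You should state and prove the version with the stabilizing $\hat X_p\succeq0$, which is what Stoorvogel's theorem, and hence the paper's proof, actually delivers.
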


Theorem~\ref{thm:HinfPreview} follows directly by applying Theorem~9.2 of
\cite{stoorvogel93} to the augmented plant \eqref{eq:PicAug} with generalized
error $e(t)=\hat{C}_p\hat{x}_p(t)+Du(t)$, where
$\hat{C}_p := \bmtx \hat{Q}_p^{1/2} & 0 \emtx^\top$ and
$D := \bmtx 0 & R^{1/2}\emtx^\top$.
With this choice, $J(K_p,d)=\|e\|_2^2$ and the closed-loop induced $\ell_2$
gain from $d$ to $e$ is less than $\gamma$ if and only if
\eqref{eq:hinfgoal_preview} holds.

This policy can be further expressed directly in terms of the plant state $x(t)$ and previewed disturbances:
\begin{align}
\label{eq:HinfpreviewRe}
u_{\infty,p}(t) &= -\tilde{K}_{x}\,x(t)  - \tilde{K}_d\,d(t)-
\tilde{K}_v\sum_{j=1}^{p}\hat{X}_{p}^j\,d(t+j),
\end{align}
% \begin{align}
% \label{eq:HinfpreviewRe}
% u_{\infty,p}(t) &= -\tilde{K}_{x}\,x(t) - \tilde{K}_{d}\,d(t) -
% \tilde{K}_vv_{\infty,p}(t+1)\\
% v_{\infty,p}(t+1)&=\sum_{j=1}^{p}\hat{X}_{p,j,d}\,d(t+j),
% \end{align}
where
\begin{align}
\tilde{K}_{x} &= H^{-1} B_u^{\top}\hat{X}_{p}^0 A, \\
\tilde{K}_{d} &= H^{-1} B_u^{\top}\hat{X}_{p}^0 B_d, \\
\tilde{K}_{v} &= H^{-1} B_u^{\top}.
\end{align}
Here, we block partition the first $n_x$ rows of $\hat{X}_p$ as follows:
\begin{align}
   \bmtx
        \hat{X}_{p}^0,\hat{X}_{p}^1,\dots,\hat{X}_{p}^p
    \emtx,
\end{align}
where $\hat{X}_{p}^0\in \R^{n_x \times n_x}$ and $\hat{X}_{p}^j\in \R^{n_x \times n_d}$ for each $j=1,\dots,p$. The minimal achievable $\gamma$ with $p$ steps of preview is denoted $\gamma_{\infty,p}$.

Thus, $u_{\infty,p}(t)$ uses the preview information $i_p(t)$ in the same
additive structure as the non-causal controller, but truncated to $p$
steps.

\section{Main Results}
\label{sec:Main}
In this section, we first recall the $H_2$ $p$-step preview controller from Theorem~3 of~\cite{liu2025stochastic}.

\begin{theorem}
\label{thm:H2controller}
Let $(A,B_u,B_d,Q,R)$ be given and satisfy conditions $(i)-(iv)$ in
Theorem~\ref{thm:NC}.
% assume: 
% (i) $Q\succeq 0$ and $R \succ 0$, (ii) $\left(A, B_u\right)$ stabilizable, (iii) $A$ is nonsingular, and (iv) $(A, Q)$ has no unobservable modes on the unit circle. 
We also assume the disturbance is the independent and identically distributed (IID) noise: $\E[d_i]=0$ and $\E[d_id_j^{\top}]=\delta_{ij} \, I$ for $i,j\in \{0,1,\ldots\}$, where $\delta_{ij}=1$ if $i = j$ and $\delta_{ij}=0$ otherwise. Define a $H_2$ $p$-step preview controller $K_{2,p}$ with inputs $i_p(t)$ given in \eqref{eq:IHinfo} and output $u_{2,p}(t)$ by the following update equations:
\begin{align}
 \label{eq:H2u}
        u_{2,p}(t)=-K_x x(t)-K_v \sum_{j=t}^{t+p}\left( \tilde{A}^\top \right)^{j-t}X B_d d(j),
\end{align}
where 
\begin{align*}
    K_x & :=  (R+B_u^\top X B_u)^{-1}B_u^\top XA,\\
  K_v &  :=   (R+B_u^\top X B_u)^{-1} B_u^\top,
 % K_d &  :=   (R+B_u^\top X B_u)^{-1} B_u^\top  X B_d.
\end{align*} 
and $X$ is a unique stabilizing positive solution to DARE~\eqref{eq:DARE} defined by $(A,B_u,Q,R)$.

Then this controller $K_{2,p}$ stabilizes the plant and solves:
\begin{align}
    \min_{K_p \text{ stabilizing}} \E\left[ J(K_p,d) \, | \, i_p(0) \right].
\end{align}
\end{theorem}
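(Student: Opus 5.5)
The plan is to reduce the stochastic preview problem to a completion-of-squares argument built on the non-causal solution of Theorem~\ref{thm:NC}. Define the non-causal costate $v_{nc}(t)=\sum_{j\ge t}(\tilde{A}^\top)^{j-t}XB_d d(j)$, so that $u_{nc}(t)=-K_x x(t)-K_v v_{nc}(t+1)-K_d d(t)$. The first step is the standard identity for any stabilizing $K$ and $d\in\ell_2$ with $x(0)=0$:
\begin{align*}
J(K,d)=J(K_{nc},d)+\sum_{t=0}^\infty \big(u(t)-u_{nc}(t)\big)^\top (R+B_u^\top X B_u)\big(u(t)-u_{nc}(t)\big).
\end{align*}
Here $u_{nc}(t)$ is evaluated along the closed-loop state $x(t)$ generated by $K$. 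I will obtain this by telescoping $x(t)^\top X x(t)+2x(t)^\top v_{nc}(t)$ along the trajectory and using the DARE \eqref{eq:DARE} together with the backward recursion \eqref{eq:Knc}; this is essentially the argument behind item 3 of Theorem~\ref{thm:NC}. For IID noise I will work on a finite horizon $T$ with terminal terms, or with the time-averaged cost, so that the identity holds pathwise and the limit is taken afterward. Note that $J(K_{nc},d)$ contains only terms that depend on $d$ and not on the controller.

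Second, take conditional expectations. At time $t$ the controller knows $i_p(t)$, which for a causal preview policy determines $x(t)$ and $d(0),\dots,d(t+p)$. The regret term at time $t$ is a positive-definite quadratic in $u(t)-u_{nc}(t)$. Hence the expected cost is minimized pointwise by $u(t)=\E[u_{nc}(t)\mid i_p(t)]$, since the conditional mean minimizes the weighted mean-square error. Because the $d(j)$ are zero-mean and independent of the past, $\E[d(j)\mid i_p(t)]=0$ for $j>t+p$. Truncating the sum therefore gives exactly \eqref{eq:H2u}, where the $j=t$ term reproduces $K_d d(t)$ through $K_d=K_vXB_d$. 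Moreover, the choice of $u(t)$ does not affect the conditional distribution of future $d$ or of $u_{nc}$ in a way that breaks this argument. The reason is that $u_{nc}(t)-\E[u_{nc}(t)\mid i_p(t)]$ depends only on $d(t+p+1),\dots$, which are independent of everything the controller does.

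Third, I will check stability. The closed loop is $x(t+1)=\tilde{A}x(t)+(\text{finite moving-average of } d)$, and $\tilde{A}$ is Schur by Theorem~\ref{thm:NC}, so the controller is stabilizing.

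The main obstacle is rigor for IID noise, where $d\notin\ell_2$ and $J$ is infinite. I expect to handle it by stating the optimization as an expected finite-horizon sum, or an average cost, for which the completion-of-squares identity holds up to boundary terms. I would then show that these boundary terms are bounded using Schur stability of $\tilde{A}$, so that they vanish in the limit. The conditioning on $i_p(0)$ is then interpreted through the tower property.
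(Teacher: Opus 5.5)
Your argument is correct. It differs from the paper mainly in being self-contained: the paper gives no proof here and simply imports the result from Theorem~3 of \cite{liu2025stochastic}.

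\textbf{Why the argument works.} The identity $J(K,d)=J(K_{nc},d)+\sum_t\|u(t)-u_{nc}(t)\|_H^2$ holds, with $H=R+B_u^\top XB_u$ and $u_{nc}(t)$ evaluated on the trajectory of $K$. The DARE reduces the terms quadratic in $x$ to $x^\top Xx$. The terms linear in $x$ collapse to $2x(t)^\top\tilde A^\top(v_{nc}(t+1)+XB_dd(t))=2x(t)^\top v_{nc}(t)$ by \eqref{eq:Knc}. What remains involves only $d$.

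Your key observation is what makes greedy minimization legitimate. Write $u(t)-u_{nc}(t)=a(t)+b(t)$, where $a(t)=u(t)+K_xx(t)+K_v\sum_{j=t}^{t+p}(\tilde A^\top)^{j-t}XB_dd(j)$ and $b(t)=K_v\sum_{j>t+p}(\tilde A^\top)^{j-t}XB_dd(j)$. Then $a(t)$ is $\sigma(d(0),\dots,d(t+p))$-measurable, while $b(t)$ is independent of that $\sigma$-algebra and has zero mean. Hence $\E\|u(t)-u_{nc}(t)\|_H^2=\E\|a(t)\|_H^2+\E\|b(t)\|_H^2$. The second term does not depend on the policy, and $a\equiv0$ is exactly \eqref{eq:H2u}. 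This step needs genuine independence, not just $\E[d_id_j^\top]=\delta_{ij}I$, because competing policies may be nonlinear.

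\textbf{What your route buys.} Apply the same identity to a deterministic $d\in\ell_2$. Since $u_{2,p}(t)-u_{nc}(t)$ does not involve $x(t)$, you get the exact regret
\[
J(K_{2,p},d)-J(K_{nc},d)=\sum_{t\ge 0}\Big\|K_v\sum_{l\ge p+1}(\tilde A^\top)^lXB_dd(t+l)\Big\|_H^2 .
\]
This is manifestly nonnegative. It yields Lemma~\ref{lemma:H2cost} with an $O(\alpha^{2(p+1)})$ bound, more directly than the $l,m,q$ expansion in the Appendix.

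\textbf{Repairs needed.}
\begin{enumerate}
\item \emph{Indexing.} With your definition $v_{nc}(t)=\sum_{j\ge t}(\tilde A^\top)^{j-t}XB_dd(j)$, the formula $u_{nc}(t)=-K_xx(t)-K_vv_{nc}(t+1)-K_dd(t)$ is off by a factor $\tilde A^\top$ in the tail. The recursion \eqref{eq:Knc} gives $v_{nc}(t)=\sum_{j\ge t}(\tilde A^\top)^{j-t+1}XB_dd(j)$. That is the $v_{nc}$ that must appear in the telescoped quantity $x^\top Xx+2x^\top v_{nc}$. Your final truncated formula is nonetheless right.
\item \emph{IID noise and boundary terms.} Under IID noise $J(K_{nc},d)$ itself diverges, so keep only the finite sum of $d$-only terms up to $T$. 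Then choose one of two formulations.
\begin{itemize}
\item Add the terminal penalty $x(T)^\top Xx(T)+2x(T)^\top v_{nc}(T)$. This makes $K_{2,p}$ exactly optimal for every $T$, and conditioning on $i_p(0)$ is harmless.
\item Use the average cost. The boundary term is then $O(1)$ and vanishes only after dividing by $T$. Bounding it requires $\sup_T\E\|x(T)\|^2<\infty$ for the competitor, which is where ``$K_p$ stabilizing'' enters. Schur stability of $\tilde A$ only bounds $v_{nc}(T)$.
\end{itemize}
An unnormalized finite-horizon sum without that terminal penalty would not make $K_{2,p}$ optimal.
\item \emph{Information structure.} $i_p(t)$ does not determine $d(0),\dots,d(t-1)$. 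What you need, and do have, is that everything the controller uses at time $t$ is $\sigma(d(0),\dots,d(t+p))$-measurable.
\end{enumerate}
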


Note that Theorem~\ref{thm:H2controller} assumes IID noise and optimizes the expected cost in a stochastic formulation. However, we show next that the cost of $H_2$ preview controller with finite preview $p<\infty$ converges, in the limit as $p\to \infty$, to the optimal non-causal controller for any fixed (deterministic) disturbance $d\in \ell_2$.
This result relies on the structural properties of the non-causal controller
established in Theorem~\ref{thm:NC}, and extends our prior work~\cite{liu2025stochastic}
from the stochastic setting to deterministic disturbance sequences.

\begin{lemma}
\label{lemma:H2cost}
Let $(A,B_u,B_d,Q,R)$ be given and satisfy conditions $(i)-(iv)$ in
Theorem~\ref{thm:NC}.
% assume: 
% (i) $Q\succeq 0$ and $R \succ 0$, (ii) $\left(A, B_u\right)$ stabilizable, (iii) $A$ is nonsingular, and (iv) $(A, Q)$ has no unobservable modes on the unit circle. 
Then, for any $d \in \ell_2$,
\begin{align}
\lim_{p\to\infty} J(K_{2,p},d) = J(K_{nc},d).
\end{align}
Moreover, the convergence is uniform in $d$ in the following sense:  For all $\epsilon>0$ there exists $p^*$ such that for all $p\ge p^*$,
\begin{align}
\label{eq:J2JncBound}
    J(K_{2,p},d)-J(K_{nc},d)\leq\epsilon\,\|d\|_2^2.
\end{align}
\end{lemma}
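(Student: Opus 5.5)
Our approach is to write $K_{2,p}$ as a perturbation of $K_{nc}$ and bound the extra cost by the squared size of the perturbation. Comparing \eqref{eq:H2u} with the unraveled form of $u_{nc}$, the two controllers share the state feedback $-K_x x(t)$ and differ only in the truncated tail. Hence
\begin{align*}
u_{2,p}(t) = -K_x x(t) - K_v v_{nc}(t+1) - K_d d(t) + w_p(t),\qquad
w_p(t) := K_v \sum_{j=t+p+1}^{\infty} \left(\tilde{A}^\top\right)^{j-t} X B_d d(j).
\end{align*}
So $K_{2,p}$ is $K_{nc}$ with an extra injected signal $w_p$ that depends only on $d$.

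\textbf{Step 1: Stability and the injected signal.} Both closed loops evolve with the Schur matrix $\tilde{A}$ from Theorem~\ref{thm:NC}, so both are stable, and the state difference $e(t) := x_{2,p}(t)-x_{nc}(t)$ obeys
\begin{align*}
e(t+1) = \tilde{A} e(t) + B_u w_p(t), \qquad e(0)=0 .
\end{align*}
The input difference is $-K_x e(t) + w_p(t)$. Thus the error signals are the output of a fixed stable LTI system $G$, independent of $p$, driven by $w_p$.

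\textbf{Step 2: Bound $w_p$ uniformly in $d$.} Since $\tilde{A}$ is Schur, there are constants $c>0$ and $\rho\in(0,1)$ with $\|(\tilde{A}^\top)^k\| \le c\rho^k$. Then $w_p = \sum_{k>p} M_k\, d(\cdot+k)$ with $\|M_k\|\le c'\rho^k$. By Minkowski's inequality and the fact that shifts are contractions on $\ell_2$,
\begin{align*}
\|w_p\|_2 \le \delta_p\|d\|_2, \qquad \delta_p := c'\rho^{p+1}/(1-\rho) \to 0 .
\end{align*}

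\textbf{Step 3: Cost difference, the main obstacle.} Expanding the quadratic cost, $J(K_{2,p},d)-J(K_{nc},d)$ equals a quadratic term in $(e,\Delta u)$ plus a cross term $2\sum (x_{nc}^\top Q e + u_{nc}^\top R\Delta u)$.

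\emph{Route via optimality.} The cross term should vanish by the optimality of $K_{nc}$ over all inputs for fixed $d$ (Theorem~\ref{thm:NC}, item 3). The map $\Delta u \mapsto J$ is a convex quadratic in the input sequence, $K_{nc}$ attains its minimum, and any stable perturbation $\Delta u\in\ell_2$ gives admissible trajectories. The first variation is therefore zero, leaving
\begin{align*}
J(K_{2,p},d)-J(K_{nc},d) = \|Q^{1/2}e\|_2^2 + \|R^{1/2}\Delta u\|_2^2 \le \|G\|_\infty^2\, \delta_p^2\, \|d\|_2^2 .
\end{align*}
A subtlety is that Theorem~\ref{thm:NC} compares against stabilizing controllers rather than arbitrary input sequences. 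The perturbed input is realized by a stabilizing (non-causal) law with $e\in\ell_2$, which should suffice. Alternatively, one can verify stationarity directly through the costate $v_{nc}$.

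\emph{Fallback route.} If the optimality argument is awkward, bound the cross term by Cauchy--Schwarz, using $\|x_{nc}\|_2,\|u_{nc}\|_2 \le C\|d\|_2$. This gives a difference of at most $(2C\|G\|_\infty\delta_p + \|G\|_\infty^2\delta_p^2)\|d\|_2^2$, which still tends to zero uniformly in $d$.

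\textbf{Conclusion.} Choosing $p^*$ so that the resulting coefficient is at most $\epsilon$ yields \eqref{eq:J2JncBound}. For fixed $d$, this bound also gives the pointwise limit $\lim_{p\to\infty} J(K_{2,p},d) = J(K_{nc},d)$.
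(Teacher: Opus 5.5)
Your proof is correct, and it takes a genuinely different route from the paper's.

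The paper's argument:
\begin{itemize}
\item It imports an explicit identity for $J(K_{2,p},d)-J(K_{nc},d)$ from Theorem~3 of \cite{liu2025stochastic}.
\item It splits that identity into three series $l(t)$, $m(t)$, $q(t)$.
\item It bounds each series in absolute value using Cauchy--Schwarz and Tonelli reindexing, arriving at $(4a+2bc)\frac{\alpha^{p+1}}{1-\alpha}\|d\|_2^2$.
\end{itemize}
This is pure series estimation on a formula proved elsewhere.

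Your argument instead observes that $K_{2,p}$ is exactly $K_{nc}$ plus the exogenous injection $w_p$, which is the discarded tail of the feedforward sum. As a result, both loops share the Schur matrix $\tilde A$. The deviation $(e,\Delta u)$ is then the output of one fixed stable system driven by $w_p$. A single Minkowski estimate $\|w_p\|_2\le\delta_p\|d\|_2$ does all the work, and the argument is self-contained.

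Your fallback route already matches the paper's first-order rate. Your optimality route gives more. Note that $u(t)=-K_x x(t)-K_v v_{nc}(t+1)-K_d d(t)+s\,w_p(t)$ is a stabilizing non-causal law for every real $s$; you need both signs of $s$ here. Hence $s\mapsto J$ is a quadratic minimized at $s=0$, so the cross term vanishes. This gives you nonnegativity for free and the sharper rate $O(\rho^{2p})$ instead of $O(\rho^{p})$.

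Completing the square with the DARE (using $e(0)=0$ and $e\in\ell_2$) even gives the exact identity
\begin{align*}
J(K_{2,p},d)-J(K_{nc},d)=\sum_{t=0}^\infty w_p(t)^\top\left(R+B_u^\top XB_u\right) w_p(t).
\end{align*}
This shows that the paper's term-by-term absolute bounds, which are of order $\alpha^{p}$, are conservative. The paper's route has a different advantage: it needs no variational or optimality argument, only the borrowed identity and elementary summation bounds.
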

The proof of this lemma is given in the Appendix.
Lemma~\ref{lemma:H2cost} can be used to show that the $H_\infty$ preview controller also converges in cost to the optimal non-causal controller. The formal statement of this result is given in the next theorem.

\begin{theorem}
\label{thm:HinftyNormConv}
Define $\gamma_{nc}$ as the $H_\infty$ norm of the closed-loop using the non-causal controller from Theorem~\ref{thm:NC}.  This corresponds to the minimal $\gamma$ such that $J(K_{nc},d)<\gamma^2 \|d\|_2^2$ for all nonzero $d\in\ell_2$. Similarly, let $\gamma_{\infty,p}$ denote the $H_\infty$ norm of the closed-loop using the $p$-step preview $H_\infty$ optimal controller from Equation~\ref{eq:HinfpreviewRe}. Then
\begin{align}
\lim_{p\to\infty}& \gamma_{\infty,p} \;=\; \gamma_{nc}.
\end{align}
\end{theorem}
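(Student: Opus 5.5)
The plan is a two-sided sandwich argument. Lemma~\ref{lemma:H2cost} gives the upper bound, and the optimality of $K_{nc}$ in Theorem~\ref{thm:NC} gives the lower bound.

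\emph{Lower bound.} Any stabilizing $p$-step preview controller $K_p$ uses only the information $i_p(t)\subseteq i_{nc}(t)$, so it is an admissible competitor in item 3 of Theorem~\ref{thm:NC}. This gives $J(K_p,d)\ge J(K_{nc},d)$ for every $d\in\ell_2$. Taking the supremum of $J(\cdot,d)/\|d\|_2^2$ over nonzero $d$ yields $\gamma_{\infty,p}\ge\gamma_{nc}$ for every $p$. We interpret $\gamma_{\infty,p}$ as the infimal achievable closed-loop gain over stabilizing $p$-step preview controllers, which is the quantity characterized by Theorem~\ref{thm:HinfPreview}. The same reasoning also shows that $\gamma_{\infty,p}$ is nonincreasing in $p$, since a $p$-step preview controller is also admissible with $p+1$ steps (it simply ignores $d(t+p+1)$).

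\emph{Upper bound.} The $H_2$ preview controller $K_{2,p}$ of Theorem~\ref{thm:H2controller} is a stabilizing $p$-step preview controller. By $H_\infty$ optimality of $\gamma_{\infty,p}$ we therefore have $\gamma_{\infty,p}^2\le \sup_{d\ne0} J(K_{2,p},d)/\|d\|_2^2$. Fix $\epsilon>0$ and choose $p^*$ from the uniform bound \eqref{eq:J2JncBound}. For $p\ge p^*$ and every nonzero $d$,
\begin{align*}
J(K_{2,p},d)\le J(K_{nc},d)+\epsilon\|d\|_2^2\le (\gamma_{nc}^2+\epsilon)\|d\|_2^2,
\end{align*}
where the second inequality uses the definition of $\gamma_{nc}$ as the $H_\infty$ norm, i.e.\ $J(K_{nc},d)\le\gamma_{nc}^2\|d\|_2^2$. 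Hence $\gamma_{nc}^2\le\gamma_{\infty,p}^2\le\gamma_{nc}^2+\epsilon$ for all $p\ge p^*$. Since $\epsilon$ is arbitrary, $\gamma_{\infty,p}\to\gamma_{nc}$; the square roots are harmless because all quantities are nonnegative and the square root is continuous. Monotonicity is not strictly needed, but it makes the convergence monotone from above.

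\emph{Main obstacle.} The substantive difficulty is the uniformity in $d$ of the $H_2$ cost convergence. A pointwise limit for each $d$ would not control the supremum defining the $H_\infty$ norm. This uniformity is exactly what Lemma~\ref{lemma:H2cost} supplies, so here it is used as a black box.

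A minor technical point also needs care. Theorem~\ref{thm:HinfPreview} is stated for $\gamma$-suboptimal controllers with strict inequality, and $\gamma_{\infty,p}$ is defined as a minimum, so the optimum may only be an infimum. I would handle this by working with $\gamma>\sqrt{\gamma_{nc}^2+\epsilon}$: for such $\gamma$, $K_{2,p}$ certifies feasibility of the strict inequality, so Theorem~\ref{thm:HinfPreview} yields an $H_\infty$ preview controller with gain below $\gamma$. The bound $\gamma_{\infty,p}\le\gamma$ then follows, and the limit argument above is unchanged.
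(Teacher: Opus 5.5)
Your proposal is correct and follows essentially the same sandwich argument as the paper: $\gamma_{nc}\le\gamma_{\infty,p}\le\gamma_{2,p}\le\sqrt{\gamma_{nc}^2+\epsilon}$ for $p\ge p^*$, using the uniform bound \eqref{eq:J2JncBound} of Lemma~\ref{lemma:H2cost}. The paper leaves two details implicit that you spell out, namely the justification of the lower bound via item 3 of Theorem~\ref{thm:NC} and the treatment of the infimum-versus-minimum issue, but these do not change the route.
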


\begin{proof}
It follows from Lemma~\ref{lemma:H2cost} that
\begin{align*}
J(K_{2,p},d)
\le \left(\gamma_{nc}^2 + \epsilon\right)\,\|d\|_2^2.
\end{align*}
Taking the supremum over all nonzero $d\in \ell_2$ gives:
\[
\gamma_{2,p}^2 :=  \sup_{d\in\ell_2,\; d\ne 0} \tfrac{J(K_{2,p},d)}{\|d\|_2^2} \le \gamma_{nc}^2 + \epsilon.
\]
The $H_\infty$ controller $K_{\infty,p}$ minimizes the induced $\ell_2$ norm among all $p$-preview controllers,
\[
\gamma_{nc} \le\gamma_{\infty,p} \le \gamma_{2,p} \le \sqrt{\gamma_{nc}^2 + \epsilon}.
\]
% By Lemma~\ref{lemma:H2cost} and the bound in~\eqref{eq:H2CostBound},
% \begin{align*}
% J(K_{2,p},d)
% &\le J(K_{nc},d) + \left(4a + 2b_1+2b_2c_1\right)\,\frac{\alpha^{p+1}}{1-\alpha}\,\|d\|_2^2\\
% &\le \left(\gamma_{nc}^2 + \epsilon_p\right)\,\|d\|_2^2,
% \quad \epsilon_p  :=  \left(4a + 2b_1+2b_2c_1\right)\,\frac{\alpha^{p+1}}{1-\alpha}.
% \end{align*}
% Taking $\sup_{d\neq 0}$ gives
% \[
% \gamma_{2,p}^2 :=  \sup_{d\in\ell_2,\; d\ne 0} \tfrac{J(K_{2,p},d)}{\|d\|_2^2} \le \sqrt{\gamma_{nc}^2 + \epsilon_p}.
% \]
% Since one can always find a $K_{\infty,p}$ that minimizes the induced $\ell_2$ norm among all $p$-preview controllers,
% \[
% \gamma_{nc} \le\gamma_{\infty,p} \le \gamma_{2,p} \le \sqrt{\gamma_{nc}^2 + \epsilon_p}.
% \]
For any $\hat{\epsilon}>0$, one can choose $p$ sufficiently large so that $\sqrt{\gamma_{nc}^2 + \epsilon}<\gamma_{nc}+\hat{\epsilon}$. By the squeeze theorem, this establishes the claim.
\end{proof}

\begin{theorem}
\label{thm:RegretConv}
Let $\gamma_{R,p}$ denote the minimal regret level in Definition~\ref{def:nomregret} achievable by a $p$-step preview controller. Then
\begin{align}
\lim_{p\to\infty}& \gamma_{R,p} \;=\; 0,
\end{align}
and for any $\hat{\epsilon}>0$, there exist $p$ and a controller $K_{R,p}$ such that
\begin{align}
    0\leq J(K_{R,p},d)-J(K_{nc},d)<\hat{\epsilon} \qquad \forall d \in \ell_2, \, d\ne 0.
\end{align}
\end{theorem}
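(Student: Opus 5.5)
The plan is to use the $H_2$ preview controller $K_{2,p}$ of Theorem~\ref{thm:H2controller} as an explicit candidate and invoke the uniform bound of Lemma~\ref{lemma:H2cost}. Since $\gamma_{R,p}$ is the minimal regret level over all stabilizing $p$-step preview controllers, it suffices to find, for each $\epsilon>0$, some $p$-step preview controller whose regret is at most $\epsilon\|d\|_2^2$ uniformly in $d$.

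\textbf{Lower bound.} By Theorem~\ref{thm:NC}, $J(K_{nc},d)\le J(K,d)$ for every stabilizing controller $K$ and every $d\in\ell_2$. Any stabilizing $p$-step preview controller uses only the information $i_p(t)\subseteq i_{nc}(t)$, so it is admissible in that comparison. Hence the regret $J(K_p,d)-J(K_{nc},d)$ is nonnegative, and $\gamma_{R,p}\ge 0$ for every $p$.

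\textbf{Upper bound.} Fix $\epsilon>0$. Lemma~\ref{lemma:H2cost} gives a $p^*$ such that for all $p\ge p^*$ and all $d\in\ell_2$,
\[
J(K_{2,p},d)-J(K_{nc},d)\le \epsilon\|d\|_2^2 .
\]
By Theorem~\ref{thm:H2controller}, $K_{2,p}$ is stabilizing. For nonzero $d$ this gives the strict inequality $<2\epsilon\|d\|_2^2$, so $K_{2,p}$ achieves $\sqrt{2\epsilon}$-regret in the sense of Definition~\ref{def:nomregret}. Therefore
\[
0\le\gamma_{R,p}\le\sqrt{2\epsilon}\qquad\text{for all } p\ge p^*.
\]
Since $\epsilon$ is arbitrary, $\gamma_{R,p}\to 0$.

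\textbf{Second claim.} Take $K_{R,p}:=K_{2,p}$, or the regret-optimal $p$-step controller if one exists. The natural reading of the intended bound is the normalized form $J(K_{R,p},d)-J(K_{nc},d)<\hat\epsilon\|d\|_2^2$, i.e.\ regret level $\gamma^2=\hat\epsilon$. Choosing $\epsilon=\hat\epsilon/2$ above gives exactly this, with nonnegativity coming from the lower-bound step.

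An unnormalized bound $<\hat\epsilon$ for all $d\in\ell_2$ cannot hold. Regret is homogeneous of degree two under $d\mapsto cd$, so any uniform additive bound would force the regret to vanish identically. I would therefore state the result in the normalized (or $\|d\|_2\le 1$) form and remark on this.

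\textbf{Main obstacle.} The substantive work is entirely in the uniformity of Lemma~\ref{lemma:H2cost}, which I am allowed to assume. The only care needed here is:
\begin{itemize}
\item passing from the non-strict bound of the lemma to the strict inequality in Definition~\ref{def:nomregret}, which is handled by the factor $2$;
\item confirming that $K_{2,p}$ is admissible, i.e.\ stabilizing and using only $i_p(t)$.
\end{itemize}
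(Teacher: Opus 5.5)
Your proof is correct. For $\lim_{p\to\infty}\gamma_{R,p}=0$ it is the paper's argument: the uniform bound of Lemma~\ref{lemma:H2cost} makes $K_{2,p}$ an admissible $p$-step preview controller whose regret level bounds $\gamma_{R,p}$. Your factor of $2$ is harmless. If you read $\gamma_{R,p}$ as an infimum, $\gamma_{R,p}^2\le\epsilon$ already follows, as the paper writes.

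The difference is in the second claim. Your homogeneity remark is correct, and the paper does not make it. For linear controllers with $x(0)=0$, the regret scales by $c^2$ under $d\mapsto cd$. So a bound $<\hat\epsilon$ uniform over all nonzero $d\in\ell_2$ forces identically zero regret. The claim as literally stated can therefore hold only in the degenerate case where some $p$-step controller matches $K_{nc}$ for every $d$.

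The paper's proof does not confront this. It fixes $d$ first and then picks $p$ with $\gamma_{R,p}^2\|d\|_2^2\le\epsilon\|d\|_2^2\le\hat\epsilon$, using a regret-minimizing $K_{R,p}$ whose existence is asserted without proof. What that actually establishes is a pointwise statement: $p$ depends on $d$, so the quantifiers are reversed relative to the theorem, and the inequality is non-strict.

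Your normalized, uniform version uses $K_{2,p}$ directly as the witness and gives $J(K_{2,p},d)-J(K_{nc},d)<\hat\epsilon\|d\|_2^2$ for all nonzero $d$. It implies the paper's pointwise version: apply it with $\hat\epsilon/\|d\|_2^2$ in place of $\hat\epsilon$. It also avoids relying on the existence of a regret-optimal controller. It is the cleaner fix of the statement.
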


\begin{proof}
From~\eqref{eq:J2JncBound}, for any $\epsilon>0$ there exists $p^*$ such that
\[
\sup_{d\in\ell_2,\; d\ne 0}\frac{J(K_{2,p},d)-J(K_{nc},d)}{\|d\|_2^2}
\leq \epsilon\qquad \forall\, p\geq p^*.
\]

Since one can always find a $K_{R,p}$ that minimizes regret over all $p$-preview controllers,
\[
\gamma_{R,p}^2 \le \sup_{d\in\ell_2,\; d\ne 0}\frac{J(K_{2,p},d)-J(K_{nc},d)}{\|d\|_2^2} \leq \epsilon.
\]
For each fixed $d\in\ell_2$ and $\hat{\epsilon}>0$, one can find a $p$ such that
\[
0 \le J(K_{R,p},d)-J(K_{nc},d) \le \gamma_{R,p}^2\,\|d\|_2^2 \leq \epsilon\|d\|_2^2\leq\hat{\epsilon},
\]
which proves both claims.

% From~\eqref{eq:H2CostBound},
% \[
% \sup_{d\ne 0}\frac{J(K_{2,p},d)-J(K_{nc},d)}{\|d\|_2^2}
% \le
% \left(4a + 2b_1+2b_2c_1\right)\,\frac{\alpha^{p+1}}{1-\alpha} = \epsilon_p.
% \]
% Since one can always find a $K_{R,p}$ that minimizes regret over all $p$-preview controllers,
% \[
% \gamma_{R,p} \;\le\; \epsilon_p \;\xrightarrow[p\to\infty]{}\; 0.
% \]
% For each fixed $d\in\ell_2$ and $\epsilon>0$, one can find a $p$ s.t.
% \[
% 0 \le J(K_p,d)-J(K_{nc},d) \le \gamma_{R,p}\,\|d\|_2^2 \leq \epsilon_p\leq\epsilon,
% \]
% which proves both claims.
\end{proof}

\section{Example}
\label{sec:Example}
This section uses a simple example to demonstrate the performance of preview controllers in both the $H_\infty$ and regret settings. 
\footnote{The code to reproduce all results is available at: 
\url{https://github.com/jliu879/Regret-of-Hinf-Preview-Controllers}.} Consider the LTI system
\begin{align}
\begin{split}
  \label{eq:ExampleSystem}
     x(t+1) &= A x(t) + B_d d(t) + B_u u(t),
\end{split}
\end{align}
with the following parameters:
\begin{align*}
    A &= \bmtx3&1\\-1&-2\emtx, \quad 
    B_d  = \bmtx1\\1\emtx, \quad
    B_u = \bmtx3\\-1\emtx.
\end{align*}
The quadratic cost parameters are: $Q = \bmtx3&0\\0&3\emtx$, $R = 1$.
Three controllers are synthesized for this system:

1. $H_\infty$ Preview Controller:  
   For each preview horizon $p$, we compute the optimal $H_\infty$ $p$-step preview controller $K_{\infty,p}$. 
    This controller is computed using the approach outlined in Section~\ref{sec:HinfPreview}.  We use bisection to minimize $\gamma$ within a small tolerance of $10^{-10}$.  We denote the minimal (nearly optimal) feasible performance as $\gamma_{\infty,p}$.
   
2. Non-Causal Controller: 
   The optimal non-causal controller $K_{nc}$ is designed according to Theorem~\ref{thm:NC}. We let $\gamma_{nc}$ denote the closed-loop $H_\infty$ performance achieved by the optimal non-causal controller.
   % We compare the $H_\infty$-norm of the closed loops induced by $K_{\infty,p}$ and $K_{nc}$ (see Fig.~\ref{fig:Hinfpreview}).

3. Regret Preview Controller:  
   Finally, we synthesize the regret-optimal $p$-step preview controller $K_{R,p}$. This controller is computed using the approach outlined in \cite{liu2024robust} and Section~\ref{sec:HinfPreview}. The non-causal controller is first computed as in Theorem~\ref{thm:NC}. Then a spectral factorization is used to transform the preview regret problem into an equivalent $H_\infty$ preview formulation. Finally, the transformed problem is solved as an $H_\infty$ preview problem using the same synthesis approach as above. Bisection is again used to minimize $\gamma$ to within a tolerance of $10^{-10}$. The minimal (nearly optimal) feasible performance is denoted by $\gamma_{R,p}$.

First, we consider the closed-loop performance obtained by the (nearly) optimal $p$-step preview controller,  $K_{\infty,p}$. Figure~\ref{fig:Hinfpreview} shows $\gamma_{\infty,p}$ versus the preview length $p$ (blue dashed curve).  The performance of the optimal non-causal controller, $K_{nc}$, is also shown (red solid). The performance of the preview controller $\gamma_{\infty,p}$ improves monotonically with increasing $p$ and approaches the non-causal bound $\gamma_{nc}$. This illustrates our result in Theorem~\ref{thm:HinftyNormConv} that $\gamma_{\infty,p} \to \gamma_{nc}$  as $p\to \infty$. Preview information enables the controller to asymptotically match the $H_\infty$ performance of the non-causal baseline.

\begin{figure}[ht]
    \centering
    \includegraphics[width=0.5\linewidth]{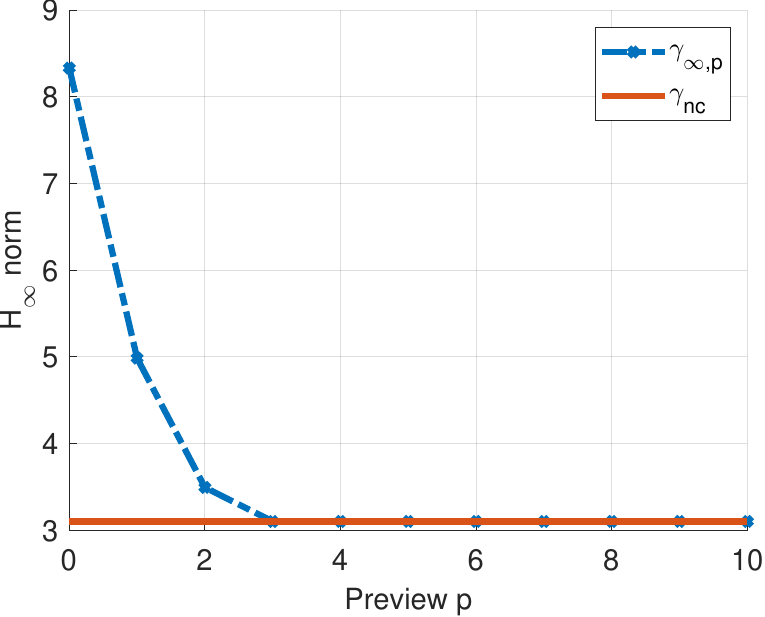}
    \caption{Closed-loop $H_\infty$ norm of the $p$-step preview controller versus preview length $p$. The closed-loop $H_\infty$ norm with the optimal non-causal controller is also shown.}
    \label{fig:Hinfpreview}
\end{figure}

Next, we analyze the regret performance of the additive regret controller $K_{R,p}$ relative to the non-causal baseline $K_{nc}$. Figure~\ref{fig:Regpreview} shows the optimal regret bound $\gamma_{R,p}$ as a function of preview length $p$. The regret decreases rapidly as $p$ increases. This illustrates the result stated in Theorem~\ref{thm:RegretConv}:
$\gamma_{R,p}\to 0$ as $p\to \infty$. Hence, preview information enables the controller to asymptotically match the performance of the non-causal baseline in terms of regret as well.

\begin{figure}[ht]
    \centering
    \includegraphics[width=0.5\linewidth]{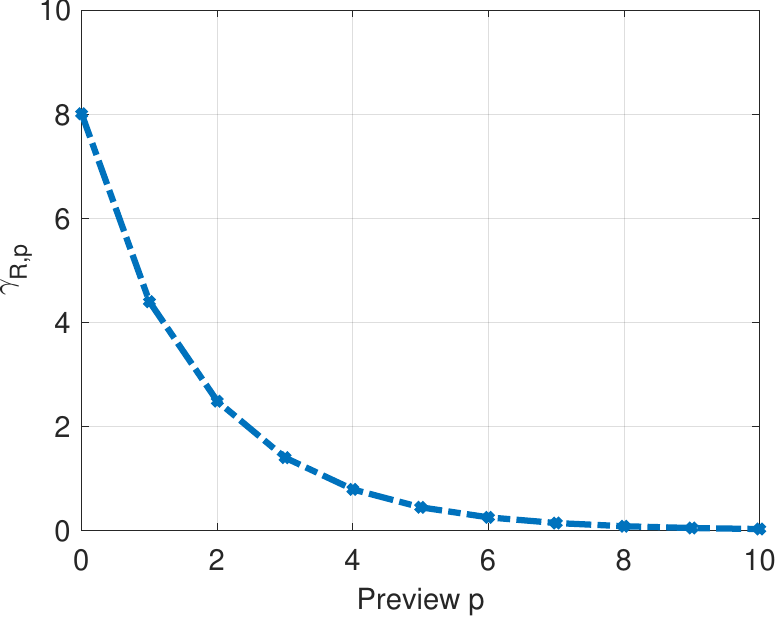}
    \caption{Optimal additive regret $\gamma_{R,p}$ versus preview length $p$.}
    \label{fig:Regpreview}
\end{figure}

Finally, the regret bound is always larger than the excess $H_\infty$ gap, i.e., $\gamma_{R,p} \geq \gamma_{\infty,p} - \gamma_{nc}$. Specifically, the following inequality holds for any stabilizing controller $K_p$:
\begin{align*}
    \sup_{\|d\|<1}[J(K_p,d)-J(K_{nc},d)]\geq \sup_{\|d\|<1}J(K_p,d)-\sup_{\|\hat{d}\|<1}J(K_{nc},\hat{d}).
\end{align*}
Minimizing both sides over stabilizing controllers $K_p$ yields $\gamma_{R,p}^2 \geq \gamma_{\infty,p}^2 - \gamma_{nc}^2$. Since $\gamma_{\infty,p}>\gamma_{nc}>0$, it follows that $\gamma_{\infty,p}^2 - \gamma_{nc}^2\geq(\gamma_{\infty,p} - \gamma_{nc})^2$. As a result, $\gamma_{R,p} \geq \gamma_{\infty,p} - \gamma_{nc}$ can be shown. Therefore, the convergence of regret to zero  (Figure~\ref{fig:Regpreview}) is slower than the convergence of the $H_\infty$ norm (Figure~\ref{fig:Hinfpreview}).
Together, these results highlight that increasing preview length improves both $H_\infty$ performance and regret guarantees, and that both metrics converge to their non-causal limits as $p \to \infty$.

\section{CONCLUSIONS}
This paper studied preview control in the $H_\infty$ and regret-optimal frameworks. We showed that the $p$-step $H_\infty$ preview controller achieves closed-loop performance that converges to the non-causal bound as $p \to \infty$. We also proved that the optimal regret of the preview controller converges to zero. This demonstrates that preview enables controllers to asymptotically match non-causal performance. A numerical example confirmed these results. Future work will establish additional convergence properties, including convergence of the $H_\infty$ preview controller to the non-causal controller.

%%%%%%%%%%%%%%%%%%%%%%%%%%%%%%%%%%%%%%%%%%%%%%%%%%%%%%%%%%%%%%%%%%%%%%%%%%%%%%%%

%%%%%%%%%%%%%%%%%%%%%%%%%%%%%%%%%%%%%%%%%%%%%%%%%%%%%%%%%%%%%%%%%%%%%%%%%%%%%%%%

%%%%%%%%%%%%%%%%%%%%%%%%%%%%%%%%%%%%%%%%%%%%%%%%%%%%%%%%%%%%%%%%%%%%%%%%%%%%%%%%

\section*{ACKNOWLEDGMENT}

%%%%%%%%%%%%%%%%%%%%%%%%%%%%%%%%%%%%%%%%%%%%%%%%%%%%%%%%%%%%%%%%%%%%%%%%%%%%%%%%

This material is based upon work supported by the National Science Foundation under Grant No. 2347026. Any opinions, findings, and conclusions or recommendations expressed in this material are those of the author(s) and do not necessarily reflect the views of the National Science Foundation. 
\bibliographystyle{IEEEtran}
\bibliography{reference}

\appendix
Here we provide the detailed proof of Lemma~\ref{lemma:H2cost}.
\begin{proof}
 By Theorem~3 of~\cite{liu2025stochastic}, the resulting cost difference can be expressed as:
{\footnotesize
\begin{align}
    \label{eq:H2CostDiff}
        J(K_{2,p},d)-J(K_{nc},d)=
        \sum_{t=0}^\infty  &\Big\{\underbrace{-2d(t)^{\top}B_d^\top\sum_{l=p+1}^\infty(\tilde{A}^\top)^lXB_dd(t+l)}_{l(t)}\\\nonumber
        &+\underbrace{2x(t)^\top (\tilde{A}^\top)^{p+1}XB_dd(t+p)}_{m(t)}\\ \nonumber
        &+\underbrace{\begin{aligned}[t]
        [2K_v & \sum_{j=t}^{t+p}\left( \tilde{A}^\top \right)^{j-t}
        X B_d d(j)
        +K_v\sum_{j=t+p+1}^{\infty}\left( \tilde{A}^\top \right)^{j-t}
        X B_d d(j)]^\top 
        H[\sum_{l=p+1}^\infty(\tilde{A}^\top)^lXB_dd(t+l)]
        \end{aligned}}_{q(t)} \Big\}.
        % &+\underbrace{[2K_dd(t)+K_vv_{2,p}(t+1)+K_vv_{nc}(t+1)]^\top H[\sum_{l=p+1}^\infty(\tilde{A}^\top)^lXB_dd(t+l)]}_{q(t)}\}.
    \end{align}
}
Define the finite constants
{\small
\begin{align}
    a& :=  \|B_d^\top\|_{2\to 2}\| X\|_{2\to 2}\| B_d\|_{2\to 2}<\infty,\\    
        b& := \|B_d^\top\|_{2\to 2}\| X\|_{2\to 2}\|K_v^\top\|_{2\to 2}\| H\|_{2\to 2}
       \|X\|_{2\to 2}\|B_d\|_{2\to 2}<\infty.
\end{align} }
Using Cauchy--Schwarz, reindexing of sums, and the fact that $x(t)$ is linear time-invariant convolutions of $\{d(j)\}$, one can derive the following bounds:
{\small
\begin{align*}
\sum_{t=0}^\infty |l(t)|
&\le2a\sum_{l=p+1}^\infty \left(\|(\tilde{A}^\top)^l\|_{2\to 2}\sum_{t=0}^\infty\|d(t)\|\|d(t+l)\|\right)\\
&\le 2a\left(\sum_{l=p+1}^\infty \|(\tilde{A}^\top)^l\|_{2\to 2}\right)\|d\|_2^2,
\end{align*}
\begin{align*}
\sum_{t=0}^\infty |m(t)|
&\le 2a\sum_{t=0}^\infty\sum_{j=0}^{t-1} \left(\|(\tilde{A}^\top)^{t-j+p}\|_{2\to 2}\|d(j)\|\|d(t+p)\|\right)\\
&\le 2a\sum_{j=0}^\infty\sum_{t=j+1}^{\infty}\left( \|(\tilde{A}^\top)^{t-j+p}\|_{2\to 2}\|d(j)\|\|d(t+p)\|\right)\\
&\underbrace{\le}_{l :=  t-j+p}2a\left(\sum_{l=p+1}^\infty \|(\tilde{A}^\top)^l\|_{2\to 2}\right)\|d\|_2^2,
\end{align*}}
{\footnotesize
\begin{align*}
\sum_{t=0}^\infty |q(t)|
 &\leq 2b\sum_{t=0}^\infty\sum_{j=t}^\infty\sum_{l=p+1}^\infty\left(\|(\tilde{A}^\top)^{j-t}\|_{2\to2}\|(\tilde{A}^\top)^l\|_{2\to2}\|d(j)\|\|d(t+l)\|\right)\\
\underbrace{\leq}_{k :=  j-t}& 2b\sum_{k=0}^\infty\sum_{l=p+1}^\infty\left(\|(\tilde{A}^\top)^{k}\|_{2\to2}\|(\tilde{A}^\top)^l\|_{2\to2}\sum_{t=0}^\infty\|d(t+k)\|\|d(t+l)\|\right)\\
\leq  2b &\left(\sum_{k=0}^\infty \|\tilde{A}^k\|_{2\to 2}\right)\!\left(\sum_{l=p+1}^\infty \|(\tilde{A}^\top)^l\|_{2\to 2}\right)\|d\|_2^2.
\end{align*}
}
In the bounds for $l(t)$, $m(t)$, and $q(t)$, we interchange the order of
summation with respect to the time indices $t$, $j$, and $l$.
This is justified as follows.
After taking absolute values and operator norms, all summands become
nonnegative. Hence, by Tonelli's theorem, the order of summation over
the corresponding countable index sets can be exchanged.

Moreover, for any fixed integer $l \ge 0$, the inner sums satisfy
\begin{align*}
    \sum_{t=0}^\infty \|d(t)\|\,\|d(t+l)\|
\le
\Big(\sum_{t=0}^\infty \|d(t)\|^2\Big)^{1/2}
\Big(\sum_{t=0}^\infty \|d(t+l)\|^2\Big)^{1/2}
\le \|d\|_2^2,
\end{align*}

by the Cauchy--Schwarz inequality and the fact that
$\sum_{t=0}^\infty \|d(t+l)\|^2 \le \|d\|_2^2$.
Since $\tilde A$ is Schur, $\sum_{l=p+1}^\infty \|(\tilde A^\top)^l\|_{2\to2} < \infty$,
which guarantees that all rearranged series are finite.

To obtain an explicit convergence rate, we invoke Gelfand’s formula.
Since $\tilde A$ is Schur, there exists $\alpha \in (\rho(\tilde A),1)$
and an integer $T$ such that $\|\tilde A^j\|_{2\to2} \le \alpha^j$ for all
$j \ge T$. Consequently, for $p \ge T$,
\[
\sum_{l=p+1}^\infty \|(\tilde{A}^\top)^l\|_{2\to 2}\;\le\;\frac{\alpha^{p+1}}{1-\alpha}
\quad\text{and}\quad
c :=  \sum_{k=0}^\infty \|\tilde{A}^k\|_{2\to 2} < \infty.
\]
Combining the three bounds yields
\begin{align}
\label{eq:H2CostBound}
0 \le J(K_{2,p},d) - J(K_{nc},d)
\le \left(4a + 2bc\right)\,\frac{\alpha^{p+1}}{1-\alpha}\,\|d\|_2^2,
\end{align}
which vanishes as $p\to\infty$ for each fixed $d\in\ell_2$. In fact, the convergence is uniform if we restrict to $\|d\|_2\leq1$.

\end{proof}

\addtolength{\textheight}{-12cm}   % This command serves to balance the column lengths
                                  % on the last page of the document manually. It shortens
                                  % the textheight of the last page by a suitable amount.
                                  % This command does not take effect until the next page
                                  % so it should come on the page before the last. Make
                                  % sure that you do not shorten the textheight too much.

\end{document}